\newtheorem{theorem}{Theorem}[section]
\newtheorem{lemma}{Lemma}[section]
\newtheorem{corollary}{Corollary}[section]
\theoremstyle{remark}
\numberwithin{equation}{section}
\begin{document}
\author{L. Ciobanu, B. Fine, G. Rosenberger}
\date{}
\title{The Surface Group Conjecture:\\ Cyclically Pinched and Conjugacy Pinched One-Relator Groups}

\maketitle

\begin{abstract}
The general {\bf surface group conjecture} asks whether a one-relator group where every subgroup of finite index is again one-relator and every subgroup of infinite index is free (property IF) is a surface group. We resolve several related conjectures given in [FKMRR]. First we obtain the Surface Group Conjecture B for cyclically pinched and conjugacy pinched one-relator groups. That is: if $G$ is a cyclically pinched one-relator group or conjugacy pinched one-relator group satisfying property IF then $G$ is free, a surface group or a solvable Baumslag-Solitar Group. Further combining results in [FKMRR] on Property IF with a theorem of H. Wilton [W] and results of Stallings [St] and Kharlampovich and Myasnikov [KhM4]  we show that Surface Group Conjecture C proposed in [FKMRR] is true, namely: If $G$ is a finitely generated nonfree freely indecomposable fully residually free group with property IF, then $G$ is a surface group.
\medskip

\noindent 2000 Mathematics Subject Classification: 20E06, 20E08, 20F70.

\noindent Key words: Surface Groups, Fully Residually Free Groups, Cyclically Pinched One-Relator Groups, Conjugacy Pinched One-Relator Groups
\end{abstract}

\section{Introduction} The \textbf {surface group conjecture} as originally proposed in the Kourovka notebook by Melnikov 
was the following problem. 

\textbf{Surface Group Conjecture.} \textit{Suppose that $G$ is a residually finite non-free, non-cyclic
one-relator group such that every subgroup of finite index is again a one-relator group. Then G is
a surface group. }

In this form the conjecture is false. Recall that the Baumslag-Solitar groups $BS(m,n)$ are the groups
$$BS(m,n) = <a,b; a^{-1}b^ma = b^n>.$$
If $|m| = |n|$ or either $|m| =1$ or $|n|= 1$ these groups are residually finite.  In all other cases these groups are  not residually finite. Further $BS(m,n)$ is Hopfian if and only if $m = \pm 1$ or $n = \pm 1$ or $m,n$ have the same set of primes  ([CoLe]). If either $|m| = 1$ or $|n| = 1$ every subgroup of finite index is again a Baumslag-Solitar group and therefore a one-relator group, and every subgroup of infinite index is infinite cyclic and therefore a free group of rank one ([GKhM]). It follows that besides the surface groups, the groups $BS(1,m)$, also satisfy Melnikov's question. We then have the following modified conjecture. 

\vspace{.3cm}

\textbf{Surface Group Conjecture A.} \textit{Suppose that $G$ is a residually finite non-free, non-cyclic
one-relator group such that every subgroup of finite index is again a one-relator group. Then $G$ is either
a surface group or a Baumslag-Solitar group $BS(1,m)$ for some integer $m$.} 

\vspace{.3cm}

We note that the groups $BS(1,1)$ and $BS(1,-1)$ are surface groups. In surface groups, subgroups of infinite index must be free groups.  To avoid the Baumslag-Solitar groups, $BS(1,m), |m| \ge 2$, Surface Group Conjecture A, was modified to:
 
 \vspace{.3cm}
 
 \textbf{Surface Group Conjecture B.} \textit{Suppose that $G$ is a non-free, non-cyclic one-relator
group such that every subgroup of finite index is again a one-relator group, there exists a noncyclic subgroup of infinite index and every subgroup of infinite index is a free group . Then $G$ is a surface group of genus $g \ge 2$.}

\vspace{.3cm}

In [FKMRR] the surface group conjecture was considered for fully residually free groups using the JSJ decomposition. A group $G$ has {\bf Property IF} if every subgroup of infinite index in $G$ is free. In [FKMRR] it was proved that a fully residually free group satisfying property IF is either a cyclically pinched one-relator group or a conjugacy pinched one-relator group. This led to the following:

\vspace{.3cm}

 \textbf {Surface Group Conjecture C.} \textit{Suppose that $G$ is a finitely generated nonfree freely
indecomposable fully residually free group with property IF. Then G is a surface group.}

\vspace{.3cm}

In Theorem \ref{mthm2} below, we settle Surface Group Conjecture C by combining results in [FKMRR] with a recent theorem of Wilton [W] and results of Kharlampovich and Myasnikov [KhM4] and Stallings [St]. This result also appears in Wilton [W] for one-ended limit groups.

\vspace{.3cm}

\textbf{Theorem (\ref{mthm2})} \textit{Suppose that $G$ is a finitely generated nonfree freely
indecomposable fully residually free group with property IF. Then G is a surface group. That is, Surface Group Conjecture C is true.}

\vspace{.3cm}

We then improve on a result of  Wilton (see Theorem 2.4)  by dropping the conditions of one-ended hyperbolic from the hypothesis.  Our main result concerns cyclically pinched and conjugacy pinched one-relator groups.

\vspace{.3cm}

\textbf{Theorem (\ref{mthm})} \textit{(1) Let G be a cyclically pinched one-relator group 
satisfying property IF.  Then $G$ is a free group or a surface group.}

\textit{$\hphantom{xx}$(2) Let G be a conjugacy pinched one-relator group 
satisfying property IF  Then $G$ is a free group,  a surface group or a solvable Baumslag-Solitar group.
}
\vspace{.3cm}

\section{Background Material and Necessary Results} Let $G$ be the fundamental group of a compact surface of
genus $g$.  Then $G$ has a one-relator presentation 
$$<a_1,b_1,...,a_g,b_g; [a_1,b_1]....[a_g,b_g]>$$
in the orientable case and 
$$<a_1,....a_g; a_1^2...a_g^2>$$
in the non-orientable case.   From covering space theory it follows that any subgroup of finite
index is again a surface group of the same or higher genus while any subgroup of infinite index must be a
free group. These results, although known since the early 1900's, were proved purely algebraically
using Reidemeister-Schreier rewriting by Hoare, Karrass and Solitar in 1971 [HKS 1,2]. It is well
known (see [FR]) that an orientable surface group can be faithfully represented as a discrete
subgroup of $PSL_2(\mathbb C)$ and hence each such group is linear.  It follows that surface groups
are residually finite. G.Baumslag [GB] showed that any orientable surface group of genus $\ge 2$ must be
residually free and 2-free from which it can be deduced using results of Remeslennikov [Re] and
Gaglione and Spellman [GS] that they are fully residually free (see section 2).  The article
[AFR] surveys most of the properties of surface groups and shows how they are the primary
motivating examples for much of combinatorial group theory.

A {\bf cyclically pinched one-relator group} is a
one-relator group of the following form
$$ G = \langle a_1,...,a_p,a_{p+1},...,a_n ; U = V \rangle$$
where $1 \ne U = U(a_1,...,a_p)$ is a cyclically reduced word in the free group $F_1$ on $a_1,...,a_p$ and
$1 \ne V = V(a_{p+1},...,a_n)$ is a cyclically reduced in the
free group $F_2$ on $a_{p+1},...,a_n$.

Clearly such a group is the free product of the free groups on $a_1,...,a_p$
and $a_{p+1},...,a_n$, respectively, amalgamated over the cyclic subgroups
generated by $U$ and $V$.
From the standard one-relator presentation for an orientable surface group of genus
$g \ge 2$ it follows that they are cyclically pinched
one-relator groups.  There is a similar decompositon in the nonorientable case. 

The HNN analogs of cyclically pinched one-relator groups are called {\bf conjugacy pinched
one-relator groups} and are also motivated by the structure of orientable surface groups.
In particular suppose
  $$ S_g = \langle a_1,b_1,...,a_g,b_g; [a_1,b_1]...[a_g,b_g] = 1\rangle.$$ 
If $b_g = t$ then $S_g$ is an HNN group of the form $$ S_g = \langle a_1,b_1,...,a_g,t; tUt^{-1} = V \rangle$$
where $U= a_g$ and $V = [a_1,b_1]...[a_{g-1},b_{g-1}]a_g$. Generalizing this we say that a {\bf
conjugacy pinched one-relator group} is a one-relator group of the  form
    $$ G = \langle a_1,...,a_n,t ; tUt^{-1} = V \rangle$$
where $1 \ne U = U(a_1,...,a_n)$ and $1 \ne V = V(a_{1},...,a_n)$ are cyclically reduced
 in the free group $F$ on $a_1,...,a_n$. 

The one-relator presentation for a surface group allows for a decomposition as a
cyclically pinched one-relator group in both the orientable and non-orientable cases and as a
conjugacy pinched one relator group in the orientable case (see [FRS]). In general cyclically pinched one-relator groups and conjugacy pinched one-relator groups have been shown to be extremely similar to surface groups. We refer to [FR] or [FRS], where a great deal more information about both cyclically pinched one-relator groups and conjugacy pinched one-relator groups is available.

In [FKMRR] several results were proved about the surface group conjectures. Recall that a  group $G$ is {\it fully
residually free} if given finitely many nontrivial elements $g_1,...,g_n$ in $G$ there is a
homomorphism $\phi:G \rightarrow F$, where $F$ is a free group, such that $\phi(g_i) \ne 1$ for all
$i=1,...,n$.  Fully residually free groups have played a crucial role in the study of equations and
first order formulas over free groups and in particular the solution of the Tarski problem (see
[KhM 1-5] and [Se 1-2]). Finitely generated fully residually free groups are also known as {\bf limit
groups}.  In this guise they were studied by Sela (see [Se 1-2] and [BeF 2]) in terms of studying
homomorphisms of general groups into free groups.
   
%We say that a group $G$ satisfies {\bf Property IF } if every subgroup of infinite index is free.
 
In  [FKMRR] several results concerning the Surface Group Conjectures
 were obtained:

\begin{theorem}[Thm 3.1, \protect{[FKMRR]}]  Suppose that $G$ is a finitely generated fully
residually free group with property IF.  Then G is either a free group or  a cyclically pinched one
relator group or a conjugacy pinched one relator group.
\end{theorem}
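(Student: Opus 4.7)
The plan is to take the abelian JSJ decomposition of $G$ as a limit group and use property IF to severely restrict the resulting graph of groups. If $G$ is itself free we are done, so assume $G$ is non-free. First, $G$ must be freely indecomposable, since a non-trivial free product $G = A * B$ would have both factors of infinite index in $G$ and hence free by IF, forcing $G$ itself to be free. A finitely generated freely indecomposable limit group is either abelian or admits a non-trivial abelian JSJ decomposition. If $G$ is abelian then $G$ has rank one or two (rank $\geq 3$ fails IF since $\Z^{n-1}$ is non-free for $n \geq 3$); rank one gives $G = \Z$, which is free, and rank two gives $G \cong \Z^2 \cong \langle a,t; tat^{-1}=a\rangle$, which is conjugacy pinched. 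So we may assume $G$ admits a non-trivial JSJ.

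The vertex groups of the JSJ come in three types: rigid, quadratically hanging (QH), and abelian. Each vertex group sits in $G$ with infinite index by Bass--Serre theory (since the splitting is reduced and non-trivial), so property IF forces each one to be free. Hence abelian vertex groups are infinite cyclic, QH vertex groups are surfaces with non-empty boundary (automatically free), and rigid vertex groups are free. Each edge group is abelian and embeds into an adjacent free vertex group, so is infinite cyclic as well.

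It remains to show the underlying graph has exactly one edge. Suppose for contradiction that it has two or more. Take a proper connected subgraph consisting of a single edge with its endpoints, or of a single loop. Its fundamental group $H$ embeds as a subgroup of $G$ and is either a cyclically pinched or a conjugacy pinched group over free vertex groups. By Bass--Serre theory the inclusion $H \hookrightarrow G$ has infinite index, so by IF, $H$ is free; but then the amalgamated cyclic element must be primitive in both adjacent free vertex groups, so the corresponding edge is collapsible in the graph of groups, contradicting reducedness of the JSJ. Therefore the graph has exactly one edge, and $G$ is either cyclically pinched (two vertices, one edge) or conjugacy pinched (one vertex, one loop), completing the proof.

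The main obstacle is the last step. One must verify that the proper-subgraph fundamental group indeed has infinite index in $G$ (which follows from Bass--Serre theory applied to the Bass--Serre tree of the graph of groups), and one must confirm that a reduced cyclic splitting of a free group with non-trivial edge group can always be collapsed, equivalently that the amalgamated element is primitive in both factors. This degeneracy is precisely what reducedness of the JSJ rules out, so the JSJ machinery for limit groups does the heavy lifting in this argument.
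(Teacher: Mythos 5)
The paper itself gives no proof of this statement (it is quoted from [FKMRR], with only the remark that it rests on the JSJ decomposition of fully residually free groups), so I am comparing your argument with the standard JSJ route that [FKMRR] takes; your overall strategy (abelian JSJ, Property IF forcing free vertex groups and infinite cyclic edge groups) is indeed that route. The genuine gap is in your last step. You argue that if the JSJ graph had two or more edges, then the fundamental group $H$ of a one-edge subgraph would be free by IF, and that freeness of $H=F_1 *_{\langle U=V\rangle}F_2$ forces the amalgamated element to be primitive in \emph{both} factors, hence the edge to be collapsible, contradicting reducedness. Neither half of this is correct. By Shenitzer's theorem, an amalgam of two free groups over a cyclic subgroup is free if and only if the amalgamated element is primitive in at least \emph{one} factor, not both; and primitivity in a factor of rank at least $2$ does not make the edge collapsible, since reducedness only forbids an edge group from being equal to an endpoint vertex group. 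Concretely, $\langle a,b\rangle *_{a=c}\langle c,d\rangle$ is free of rank $3$, and this is a reduced, non-collapsible cyclic splitting of a free group; so the contradiction you want never materializes, and the claim that the JSJ graph has exactly one edge is unproved. (If one wants to exclude such edges, the relevant obstruction is not reducedness but free indecomposability of $G$: an edge element lying in a proper free factor of a vertex group would make that vertex group freely decomposable relative to its incident edge groups and hence would split off a free factor of $G$.)

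The good news is that you do not need the graph to have exactly one edge. Keep any single edge of the JSJ and collapse all the others: if the kept edge separates the graph, $G=A *_C B$ where $A$ and $B$ are fundamental groups of the two complementary subgraphs; if it does not, $G$ is an HNN extension with base the fundamental group of the complementary subgraph. In either case the new vertex group(s) have infinite index in $G$ by the same Bass--Serre coset count you already use (the cyclic edge group is proper in each side, and an HNN base always has infinite index), so Property IF makes them free, and $G$ is exhibited directly as cyclically pinched or conjugacy pinched. You should also patch the case in which the abelian JSJ is a single QH vertex with no edges (e.g.\ $G$ a closed surface group): your reduction ``non-abelian implies non-trivial JSJ graph'' silently assumes an edge exists, and when it does not, the infinite-index argument for vertex groups is vacuous; in that case $G$ is a closed surface group and is cyclically pinched via its standard one-relator presentation, so the conclusion still holds but needs to be said.
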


\begin{corollary}[Cor 3.2, \protect{[FKMRR]}] Suppose that $G$ is a finitely generated  fully residually free group with
property IF.  Then G is either free or every subgroup of finite index is freely indecomposable and
hence a one-relator group.
\end{corollary}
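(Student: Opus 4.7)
The plan is to reduce everything to Theorem 2.1 applied both to $G$ and to its finite-index subgroups, with an ends argument controlling free decomposability. If $G$ is free, the first alternative of the corollary holds, so assume $G$ is not free; I must show that every finite-index subgroup $H \le G$ is freely indecomposable and one-relator.

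First I argue $G$ itself is freely indecomposable. Suppose toward a contradiction that $G = A * B$ with $A, B \ne 1$. Since $G$ is torsion-free (being fully residually free), both $A$ and $B$ are torsion-free, hence infinite, and a short normal-form argument gives $[G:A] = [G:B] = \infty$ (for any nontrivial $b \in B$, the cosets $b^i A$ are pairwise distinct since $A \cap B = 1$ in a free product). By property IF, both $A$ and $B$ are then free, so $G = A * B$ is itself free, contradicting our assumption. Thus $G$ is freely indecomposable.

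Next, fix a subgroup $H$ of finite index in $G$. It inherits the hypotheses of Theorem 2.1: finite generation is standard (Reidemeister--Schreier), full residual freeness passes to subgroups, and property IF transfers since any subgroup of infinite index in $H$ has infinite index in $G$. Theorem 2.1 applied to $H$ then gives that $H$ is free, or a cyclically pinched or conjugacy pinched one-relator group. To eliminate the free alternative I invoke Stallings's ends theorem: a finitely generated torsion-free group splits nontrivially as a free product if and only if it has infinitely many ends. Since $G$ is freely indecomposable, torsion-free, infinite, and not $\mathbb{Z}$ (as $\mathbb{Z}$ is free), $G$ is one-ended. The number of ends is a commensurability invariant, so $H$ is one-ended as well, hence not a nontrivial free product and not infinite cyclic; so $H$ is freely indecomposable. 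Combined with the trichotomy from Theorem 2.1, $H$ must be a cyclically pinched or conjugacy pinched one-relator group, in particular a one-relator group.

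The main obstacle is the free-indecomposability step for $G$: the key observation is that nontrivial free factors in a torsion-free group automatically have infinite index, so property IF collapses any putative splitting into a free product of free groups and forces $G$ itself to be free. Once that is in place, the rest is a clean combination of Theorem 2.1 applied to $H$ with Stallings's ends theorem and the invariance of ends under finite-index subgroups.
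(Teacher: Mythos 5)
Your argument is correct. Note that the paper itself gives no proof of this corollary: it is imported verbatim from [FKMRR], and the companion result there (Theorem 2.4 here, i.e.\ Thm 3.3 of [FKMRR]) obtains the finite-index heredity via the Karrass--Solitar subgroup theorems for amalgams and HNN extensions. Your route is different and self-contained: you first collapse any free decomposition of $G$ using the observation that nontrivial free factors of a torsion-free group have infinite index, so Property IF would force $G$ to be free; then you verify that a finite-index subgroup $H$ inherits finite generation, full residual freeness and Property IF, apply Theorem 2.1 directly to $H$, and rule out the free alternative by Stallings' ends theorem together with the fact that $e(H)=e(G)$ for finite-index subgroups (one-endedness of $G$ follows since it is infinite, torsion-free, freely indecomposable and not $\mathbb{Z}$). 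This is exactly the ends argument the authors themselves deploy in the proof of Theorem 3.2, so your proof fits the paper's toolkit; what it buys is independence from the Karrass--Solitar machinery, at the cost of invoking Stallings' structure theorem and the commensurability invariance of ends, whereas the [FKMRR] route gives the finer conclusion that the finite-index subgroups are again cyclically or conjugacy pinched. All the small steps you rely on (torsion-freeness of fully residually free groups, infinite index of free factors, transfer of IF to finite-index subgroups) check out.
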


Furthermore we have:

\begin{theorem}[Thm 3.2, \protect{[FKMRR]}] Let $G$ be a finitely generated fully residually free group with property IF. 
Then $G$ is either hyperbolic or free abelian of rank 2.
\end{theorem}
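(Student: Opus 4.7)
The plan is to combine Theorem 3.1 with a hyperbolicity criterion for limit groups and property IF to pin down the non-hyperbolic case as $\mathbb{Z}\oplus\mathbb{Z}$.

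By Theorem 3.1, $G$ is either free, cyclically pinched one-relator, or conjugacy pinched one-relator. Free groups are word hyperbolic, so assume we are in one of the two pinched cases. Then $G$ is the fundamental group of a finite graph of groups with free vertex groups and infinite cyclic edge group(s). Here I would appeal to the structural fact that every finitely generated fully residually free group is toral relatively hyperbolic with peripheral subgroups being its maximal non-cyclic abelian subgroups (Dahmani, Alibegovi\'c, Sela). Consequently $G$ is word hyperbolic if and only if $G$ contains no $\mathbb{Z}\oplus\mathbb{Z}$ subgroup. Alternatively, this dichotomy can be derived directly from Bestvina--Feighn's combination theorem applied to the pinched structure, once one excludes $BS(1,m)$ with $|m|\ge 2$ from ever embedding in a limit group: such a $BS(1,m)$ contains $\mathbb{Z}[1/m]$, and no non-cyclic abelian group embeds in a free group, so $BS(1,m)$ cannot be fully residually free.

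Now suppose $G$ is not hyperbolic. By the criterion above, $G$ contains a subgroup $A\cong\mathbb{Z}\oplus\mathbb{Z}$. Since $A$ is not free, property IF forces $[G:A]<\infty$. Every limit group is CSA, hence commutative transitive (centralizers of nontrivial elements are abelian). For any $g_1,g_2\in G$ choose $n_1,n_2\ge 1$ with $g_i^{n_i}\in A$; since $A$ is abelian $[g_1^{n_1},g_2^{n_2}]=1$, and two applications of commutative transitivity (valid since the powers are nontrivial by torsion-freeness) give $[g_1,g_2]=1$. Thus $G$ is abelian, so $G\cong\mathbb{Z}^n$. Property IF rules out $n\ge 3$, since then $\mathbb{Z}^{n-1}$ would be a non-free subgroup of infinite index, and we conclude $G\cong\mathbb{Z}\oplus\mathbb{Z}$.

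The main obstacle is the hyperbolicity criterion invoked in the second paragraph: that for a finitely generated fully residually free group the only obstruction to word hyperbolicity is a $\mathbb{Z}\oplus\mathbb{Z}$ subgroup. This is a substantive input that draws on relatively hyperbolic techniques and the JSJ theory of limit groups; once it is in hand, the remaining reduction using property IF together with commutative transitivity is essentially algebraic and routine.
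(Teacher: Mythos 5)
The paper itself does not prove this statement; it is imported verbatim from [FKMRR] (where, per the surrounding discussion, the argument rests on the cyclic JSJ decomposition of fully residually free groups), so there is no in-paper proof to compare yours against line by line. That said, your main line of argument is correct and is a genuinely different route. Its two inputs are: (i) a limit group is word hyperbolic if and only if it contains no $\mathbb{Z}\oplus\mathbb{Z}$ (via toral relative hyperbolicity, Sela/Alibegovi\'c/Dahmani) --- note that with this input you do not actually need Theorem 2.1 at all, since the criterion applies to every finitely generated fully residually free group, pinched or not; and (ii) the reduction: if $G$ is not hyperbolic it contains $A\cong\mathbb{Z}\oplus\mathbb{Z}$, property IF forces $[G:A]<\infty$, and torsion-freeness plus commutative transitivity (limit groups are CSA, hence CT) then makes $G$ abelian, after which IF pins the rank down to $2$. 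Both steps check out; the finite-index-plus-CT argument is the standard one. What your route buys is brevity, at the price of invoking relatively hyperbolic machinery external to the JSJ framework that [FKMRR] and the rest of this paper work in.

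One justification in your optional alternative route is wrong as stated: you dismiss $BS(1,m)$, $|m|\ge 2$, on the grounds that it contains $\mathbb{Z}[1/m]$ and ``no non-cyclic abelian group embeds in a free group.'' Full residual freeness is not an embedding condition, and $\mathbb{Z}[1/m]$ is locally cyclic in any case, so that sentence proves nothing. The correct reasons are, for instance, that $\mathbb{Z}[1/m]$ is $m$-divisible and therefore admits no nontrivial homomorphism to a free group (so it is not even residually free, while subgroups of fully residually free groups are fully residually free), or that limit groups are CSA whereas the maximal abelian subgroup $\mathbb{Z}[1/m]$ of $BS(1,m)$ is normal but not malnormal. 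Since this alternative is not needed for your main argument, the flaw is cosmetic rather than a genuine gap.
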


These results depend on the fact that the fully residually free groups have JSJ-decompositions. Roughly a JSJ-decomposition
of a group $G$ is a splitting of $G$ as a graph of groups with abelian edges which is canonical in
that it encodes all other such abelian splittings.  If each edge is cyclic it is called a {\it cyclic JSJ-decomposition} (see [FKMRR]).

Being fully residually free provides a graph of groups decomposition. Then property IF will imply the finite index property, as follows:  

\begin{theorem}[Thm 3.3, \protect{[FKMRR]}]\label{33}  Let $G$ be a nonfree cyclically pinched or conjugacy pinched one-relator
group with property IF.  Then each subgroup of finite index is again a cyclically pinched or
conjugacy pinched one-relator group. 
\end{theorem}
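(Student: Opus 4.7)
The plan is to apply Theorem 3.1 to a finite index subgroup $H\le G$ and then rule out the free alternative.

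First, I would establish that $H$ is finitely generated and fully residually free. Cyclically pinched and conjugacy pinched one-relator groups satisfying property IF are fully residually free: in the cyclically pinched case this traces back through Baumslag's residual freeness theorem combined with the results of Remeslennikov and of Gaglione--Spellman recalled in Section 2, and the conjugacy pinched case is analogous. Since full residual freeness passes to subgroups and $[G:H]<\infty$ forces $H$ to be finitely generated, $H$ is itself a finitely generated fully residually free group. Moreover $H$ inherits property IF: any $K\le H$ of infinite index in $H$ satisfies $[G:K]=[G:H]\,[H:K]=\infty$, so $K$ has infinite index in $G$ and is therefore free by property IF for $G$.

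Second, I would apply Theorem 3.1 to $H$, obtaining that $H$ is either free, cyclically pinched one-relator, or conjugacy pinched one-relator. To exclude the free case, suppose $H$ were free. Then $G$, having $H$ as a finite index subgroup, would be virtually free. However, $G$ is torsion-free as an amalgamated free product or an HNN extension of finitely generated free groups over infinite cyclic subgroups, and a finitely generated torsion-free virtually free group has cohomological dimension one over $\Z$ and is therefore free by Stallings--Swan. This contradicts the hypothesis that $G$ is nonfree, so $H$ must be cyclically pinched or conjugacy pinched one-relator.

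The main obstacle is the first step: securing that $G$, and hence $H$, is fully residually free in this generality. Once that is in hand, the inheritance of property IF is formal, the structural dichotomy supplied by Theorem 3.1 does the heavy lifting, and the Stallings--Swan argument disposes of the free alternative. Should full residual freeness prove delicate to invoke directly, an alternative route is to analyze $H$ through the Bass-Serre tree of the splitting of $G$: $H$ acts with finite quotient graph of groups whose vertex groups are finite rank free and whose edge groups are cyclic, and one would then use property IF to force this graph of groups to collapse to a single edge (cyclically pinched case) or a single loop (conjugacy pinched case).
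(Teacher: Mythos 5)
Your first step is where the argument breaks: it is not true that a cyclically pinched or conjugacy pinched one-relator group with property IF must be fully residually free, and the counterexamples are exactly the groups the theorem has to accommodate. The solvable Baumslag--Solitar group $BS(1,2)=\langle a,t;\ tat^{-1}=a^2\rangle$ is conjugacy pinched (take $U=a$, $V=a^2$) and has property IF, since every subgroup of infinite index in it is infinite cyclic; yet any homomorphism $\phi$ to a free group makes $\phi(a)$ conjugate to $\phi(a)^2$ and hence kills $a$, so the group is not even residually free. Likewise the Klein bottle group $\langle g,h;\ g^2=h^2\rangle$ is a cyclically pinched one-relator group with property IF that is not residually free: rewritten as $BS(1,-1)$, the stable letter conjugates a generator to its inverse, which forces that generator to die in every free quotient. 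Baumslag's residual freeness theorem concerns surface groups (and closely related special amalgams), not arbitrary cyclically pinched groups, so it cannot be ``traced through'' here. Consequently you cannot feed the finite index subgroup $H$ into Theorem 2.1 (Theorem 3.1 of [FKMRR]), whose hypothesis of full residual freeness is simply unavailable; and proving full residual freeness from IF would in effect require the classification in Theorem 3.1 of this paper, which makes the route circular, since the statement you are asked to prove is one of the quoted ingredients sitting behind that classification. The parts of your argument that are sound --- the inheritance of property IF by finite index subgroups, and the Stallings--Swan argument excluding a free subgroup of finite index in a nonfree torsion-free $G$ --- do not rescue the reduction.

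Your fallback sketch, by contrast, is precisely the route the paper attributes to [FKMRR]: the theorem is quoted from there, and its proof uses the Karrass--Solitar subgroup theorems for free products with amalgamation and for HNN groups, that is, the induced decomposition of a finite index subgroup as a graph of groups with free vertex groups and cyclic edge groups, after which property IF is used to recognize the subgroup as again cyclically pinched or conjugacy pinched. As written, however, this is only a sketch; the substantive step --- why property IF forces the induced graph of groups to reduce to a single amalgamating edge or a single loop rather than a more complicated graph --- is the actual content of the proof and is not carried out in your proposal.
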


The proof of Theorem \ref{33} used the subgroup theorems for free products with amalgamation and HNN groups as
described by Karrass and Solitar [KS 1,2].

Our proof of the Surface Group Conjecture C in Theorem \ref{mthm2} combines the results in [FKMRR] with the following statement, which is a rewording of a recent result of H.Wilton [W].

\begin{theorem}[see Cor 4, \protect{[W]}] Let $G$ be a hyperbolic one-ended cyclically pinched one-relator group or 
a hyperbolic one-ended conjugacy pinched one-relator group.
Then either $G$ is a surface group, or $G$ has a finitely generated non-free 
subgroup of infinite index.
\end{theorem}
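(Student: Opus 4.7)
The plan is to analyze $G$ via the Bass--Serre tree $T$ associated to its cyclically or conjugacy pinched splitting. The vertex stabilizers are the given free groups, the edge stabilizers are the infinite cyclic subgroups generated by $U$ and $V$ (respectively by the stable-letter conjugation data), and one-endedness forces these edge subgroups to be nontrivial (otherwise $G$ would split over the trivial group and hence have more than one end by Stallings' theorem). I would then introduce the standard graph of spaces $X$ realizing this splitting --- two wedges of circles joined by an annulus in the cyclically pinched case, or one wedge of circles with an attached handle annulus in the conjugacy pinched case --- and first prove that $X$ has \emph{non-positive immersions} in the sense of Louder and Wilton: every compact connected $2$-complex $Y$ admitting an immersion $Y \to X$ satisfies $\chi(Y) \leq 0$. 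Hyperbolicity of $G$ enters here to rule out the proper-power degeneracies in $U$ and $V$ that would otherwise produce Baumslag--Solitar obstructions inside $G$.

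The second step is a dichotomy driven by the equality case of the above inequality. If every compact $Y$ with $\chi(Y) = 0$ immersing into $X$ is (up to covers) a closed surface, then $X$ is virtually a closed surface, and hence $G$ is virtually --- and, by one-endedness and hyperbolicity, actually --- a surface group. Otherwise, I would use the strict inequality case to produce a finite cover $\widetilde X \to X$ together with a proper compact subcomplex $Y \subset \widetilde X$ whose fundamental group injects into $\pi_1(\widetilde X)$, is not free (for instance because $Y$ is a subsurface with some of its boundary components identified via the edge annuli, so that $\pi_1(Y)$ carries a nontrivial cyclic splitting and is one-ended), and has infinite index in $\pi_1(\widetilde X)$. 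Pulling this subgroup back along the finite-index inclusion $\pi_1(\widetilde X) \hookrightarrow G$ would furnish the desired finitely generated, non-free, infinite-index subgroup of $G$.

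The main technical obstacle is precisely this construction in the non-surface case: one must combine the non-positive immersion property with a careful choice of finite cover and compact subcomplex to extract a subgroup that is simultaneously non-free and of infinite index. This is the combinatorial heart of Wilton's argument, and it requires locating, inside sufficiently large covers of $X$, subcomplexes whose attaching data in $\widetilde X$ is non-trivial enough to prevent freeness yet small enough, in terms of area/perimeter, to keep the index in $\pi_1(\widetilde X)$ infinite. Once such a subcomplex is produced, the Bass--Serre machinery applied to $\pi_1(Y)$ acting on the restriction of $T$ certifies both non-freeness (via the induced cyclic splitting) and infinite index (via a covolume comparison in $T$).
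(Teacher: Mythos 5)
This theorem is not proved in the paper at all: it is stated (as ``see Cor 4, [W]'') purely as a quotation of Wilton's result, which the authors then use as a black box in Theorems 3.1 and 3.2, so there is no internal proof to compare yours against. Judged on its own, your proposal is a plan rather than a proof, and the plan has genuine gaps. First, the claim that the graph of spaces $X$ has non-positive immersions is asserted, not established; that property is a substantial theorem in its own right (it belongs to the later Louder--Wilton circle of ideas, not to [W]), and it is not clear that hyperbolicity of $G$ is what would deliver it for the annulus-glued complexes you describe. Second, even granting $\chi(Y)\le 0$ for every compact $Y$ immersing in $X$, the dichotomy you want does not follow: nothing in the proposal shows that either every $\chi=0$ immersed complex is a closed surface or else one can locate a finite cover $\widetilde X$ and a compact, $\pi_1$-injective, non-free subcomplex $Y\subset\widetilde X$ of infinite index. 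You explicitly flag this construction as ``the combinatorial heart of Wilton's argument'' and leave it as an obstacle to be overcome --- but that construction \emph{is} the theorem, so deferring it means the statement has not been proved. (Smaller issues: the step ``$X$ is virtually a closed surface, hence $G$ is a surface group'' needs both a reason why the equality case forces a finite cover of $X$ to be a closed surface and an appeal to the characterization of torsion-free virtual surface groups, e.g.\ Eckmann--M\"uller/Linnell; neither is supplied.)

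For orientation: Wilton's published argument does not go through non-positive immersions. Roughly, it treats each vertex group of the cyclic splitting as a free group equipped with the peripheral family of incident edge subgroups, uses a Whitehead-graph/JSJ-type analysis to decide whether each such pair is a surface pair, and, when some pair is not, produces a finitely generated one-ended (hence non-free) subgroup of infinite index by gluing finite-sheeted covers (precovers) of the vertex spaces along the edge annuli. If you want a complete proof along the lines you sketch, you would have to supply an argument of comparable substance for your non-surface case; as written, the proposal reduces the theorem to itself.
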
 
 
\section {Main Results}

Our main result shows that Surface Group Conjecture B is true if $G$ is a cyclically pinched or conjugacy pinched one-relator.

\begin{theorem}\label{mthm} (1) Let $G$ be a cyclically pinched one-relator group 
satisfying property IF.  Then $G$ is a free group or a surface group.

$\hphantom{xx}$ (2) Let $G$ be a conjugacy pinched one-relator group 
satisfying property IF.  Then $G$ is a free group, a surface group or a solvable Baumslag-Solitar group.
\end{theorem}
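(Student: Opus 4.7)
The plan is to reduce both parts to Wilton's Theorem 2.4 by showing that a non-free $G$ satisfying the hypotheses must be freely indecomposable, one-ended, and hyperbolic, with a solvable Baumslag--Solitar exception permitted only in the conjugacy pinched case.

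If $G$ is free, (1) and (2) hold trivially; so assume $G$ is non-free. Any nontrivial free decomposition $G = A \ast B$ would have both factors of infinite index in $G$, and hence both free by property IF, making $G$ itself free---a contradiction. So $G$ is freely indecomposable. Since cyclically and conjugacy pinched one-relator groups are torsion-free and $G$ is non-cyclic, Stallings' theorem [St] gives that $G$ has exactly one end.

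For $G = F_1 \ast_{U=V} F_2$ cyclically pinched, I next show that neither $U$ nor $V$ can be a proper power. Suppose $U = u^k$ with $k \ge 2$; let $v \in F_2$ denote the root of $V$ in its maximal cyclic subgroup, and set $H = \langle u,\, vuv^{-1}\rangle \le G$. Because $v$ centralizes $V = u^k$ in $F_2$, the relation $(vuv^{-1})^k = u^k$ holds in $G$, and a Karrass--Solitar subgroup analysis on the Bass--Serre tree (parallel to the proof of Theorem \ref{33}) identifies $H$ with $\langle x, y \mid x^k = y^k \rangle$, a group with nontrivial center that is not free. Since $H$ is contained in the normal closure of $u$, and the quotient $G/\langle\langle u\rangle\rangle$ reduces to $F_2/\langle\langle V\rangle\rangle$---an infinite group whenever $F_2$ has rank at least $2$---the index $[G:H]$ is infinite, contradicting property IF. The remaining degenerate case, $F_1 = \langle u\rangle$ and $F_2 = \langle v\rangle$, makes $G$ itself a torus-knot group $\langle x, y \mid x^k = y^\ell\rangle$ with $k,\ell \ge 2$; here the same construction produces a Klein-bottle-type subgroup of infinite index, again violating IF. Hence $U$ and $V$ are not proper powers; by Baumslag's classification $G$ is then fully residually free, so Theorem 2.3 forces $G$ to be hyperbolic (the $\mathbb{Z}^2$ alternative is excluded since $\mathbb{Z}^2$ admits no cyclically pinched presentation). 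Wilton's Theorem 2.4, applied to the one-ended hyperbolic cyclically pinched $G$, then delivers a surface group, the alternative of a finitely generated non-free infinite-index subgroup being blocked by property IF. This proves (1).

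For (2), the same plan applies with one extra case: when the HNN base $F$ is infinite cyclic, $G$ is a Baumslag--Solitar group $BS(m,n)$. Property IF then forces $|m|=1$ or $|n|=1$, for when $|m|,|n| \ge 2$ the group $BS(m,n)$ is non-Hopfian and carries finitely generated non-free subgroups of infinite index, violating IF; so $G$ is one of the solvable groups $BS(1,m)$. Otherwise the hyperbolicity argument of the previous paragraph carries over and Wilton's theorem again produces a surface group. The main obstacle throughout is the hyperbolicity step: converting the abstract proper-power assumption into an explicit non-free, infinite-index subgroup, which requires the careful Bass--Serre/Karrass--Solitar subgroup analysis sketched above.
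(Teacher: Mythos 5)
Your overall skeleton (free indecomposability from IF, one-endedness via Stallings, then Wilton, with explicit ``bad'' subgroups contradicting IF in the non-hyperbolic cases) is the paper's strategy, but the case analysis in (1) contains a genuine error. You claim that \emph{neither} $U$ nor $V$ can be a proper power; that claim is false for groups satisfying the hypotheses: the Klein bottle group $\langle g,h; g^2=h^2\rangle$ and the nonorientable genus-$3$ surface group $\langle a,b,c; a^2b^2=c^{-2}\rangle$ are cyclically pinched, satisfy IF (they are surface groups), and have proper-power defining words. Concretely, your construction collapses in the mixed case: if $V$ is not a proper power then its root $v$ equals $V=U=u^k$ in $G$, so $vuv^{-1}=u$ and $H=\langle u,vuv^{-1}\rangle$ is cyclic --- no contradiction, and no identification with $\langle x,y\mid x^k=y^k\rangle$. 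The correct dichotomy, which the paper uses, is ``not \emph{both} proper powers,'' since the cited results of Juhasz--Rosenberger, Bestvina--Feighn and Kharlampovich--Myasnikov give hyperbolicity as soon as one of $U,V$ is not a proper power (so no appeal to full residual freeness is needed). Your degenerate case is also mishandled: for $G=\langle x,y\mid x^k=y^\ell\rangle$ with $k=\ell=2$ there is no infinite-index $\mathbb Z^2$ or Klein-bottle subgroup (every such subgroup has finite index), so your argument would wrongly exclude the Klein bottle group, which must survive in the conclusion as a (non-hyperbolic) surface group. The paper treats it explicitly, and in the index argument for $H=\langle g^n,gh\rangle$ it is careful to note that the quotient obtained by killing $g^n,h^m$ is a nontrivial free product \emph{not} isomorphic to the infinite dihedral group.

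Part (2) is far too thin and the key claim is wrong. For a conjugacy pinched group $\langle a_1,\dots,a_n,t; tUt^{-1}=V\rangle$, hyperbolicity does \emph{not} ``carry over'' from the proper-power discussion: by Kharlampovich--Myasnikov one needs $\langle U\rangle$ and $\langle V\rangle$ to be conjugately separated in the base $F$. If they are not (e.g.\ $V$ conjugate to $U^k$, with $U$ not a proper power and $F$ of rank $\geq 2$), $G$ is not hyperbolic, and one must exhibit a non-free subgroup of infinite index to contradict IF; the paper does this with $K=\langle U,t\rangle\cong\langle U,t; tUt^{-1}=U^k\rangle$, proving infinite index by killing $t$ and $U$ and invoking the Freiheitssatz. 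Your proposal never addresses this case, nor the case where both $U$ and $V$ are proper powers (the paper's subgroups $N=\langle tgt^{-1},h\rangle$ and $H=\langle w^n,wh\rangle\cong\mathbb Z^2$ of infinite index), and your exclusion of non-solvable $BS(m,n)$ for a cyclic base is asserted via non-Hopficity rather than proved by producing a non-free infinite-index subgroup. As it stands, parts of the argument would ``prove'' false statements (e.g.\ that the Klein bottle group fails IF), so the proposal needs the paper's finer case analysis to be correct.
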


Before proving Theorem \ref{mthm}, we settle the Surface Group Conjecture C in the following theorem. This result appears in [Cor. 5, W] for one-ended limit groups.

\begin{theorem}\label{mthm2} Suppose that $G$ is a finitely generated nonfree freely
indecomposable fully residually free group with property IF. Then G is a surface group. That is, Surface Group Conjecture C is true.
\end{theorem}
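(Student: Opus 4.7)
The plan is to chain together Theorem 2.1, Theorem 2.3, and Wilton's Theorem 2.4, supplying the missing one-endedness hypothesis via Stallings' theorem on ends. The substantive work sits in the cited results, so the proof of Theorem \ref{mthm2} amounts to organizing them correctly and handling the abelian degenerate case by hand.

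First, since $G$ is finitely generated, fully residually free, has property IF, and is nonfree by hypothesis, Theorem 2.1 forces $G$ to be either a cyclically pinched one-relator group or a conjugacy pinched one-relator group. Theorem 2.3 then splits the analysis into two cases: $G$ is either word-hyperbolic or free abelian of rank $2$. If $G \cong \Z^2$, then $G$ is the fundamental group of the torus and hence a surface group (as the introduction explicitly records in its remarks on $BS(1,1)$), and we are done.

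Assume now that $G$ is hyperbolic; the aim is to invoke Theorem 2.4, whose hypotheses require that $G$ also be one-ended. I would establish one-endedness by applying Stallings' theorem on ends of groups: a finitely generated group with more than one end splits nontrivially over a finite subgroup. Since $G$ is a limit group, it is torsion-free, so such a splitting is in fact over the trivial subgroup, which forces $G$ to be either a nontrivial free product or isomorphic to $\Z$. Both possibilities are excluded by the hypothesis: $G$ is freely indecomposable, and $G$ is nonfree. Since $G$ is infinite (a finitely generated, torsion-free, nonfree group cannot be finite), it must have exactly one end.

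With $G$ hyperbolic, one-ended, and cyclically or conjugacy pinched, Theorem 2.4 yields two alternatives: either $G$ is a surface group, or $G$ contains a finitely generated non-free subgroup of infinite index. The second alternative is ruled out at once by property IF, and we conclude that $G$ is a surface group, settling Surface Group Conjecture C. The only step in the argument with any real content is the one-endedness reduction via Stallings; everything else is a direct invocation of a cited theorem, so this is where I would focus the write-up to make sure the use of torsion-freeness and free indecomposability is clean.
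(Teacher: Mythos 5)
Your proposal is correct and follows essentially the same route as the paper: apply Theorem 2.1 and Theorem 2.3 from [FKMRR], dispose of the $\mathbb{Z}^2$ case directly, use Stallings' ends theorem plus torsion-freeness and free indecomposability to get one-endedness, and then conclude via Wilton's theorem with property IF eliminating the non-surface alternative. The only difference is cosmetic (your appeal to torsion-freeness of limit groups to kill the finite edge group, where the paper attributes this to property IF), so nothing further is needed.
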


\begin{proof}  Suppose that $G$ is a finitely generated freely indecomposable fully residually free group with property IF. If $G$ is free abelian we are done since the free abelian group of rank $2$ is a surface group, and of higher rank does not posses IF. If it is not free abelian then from Theorem 1.2 from [FKMRR] it follows that $G$ is hyperbolic. 

Since $G$ is assumed to be a finitely generated freely indecomposable fully residually group and satisfies Property IF, from Theorem 2.1 in [FKMRR] it follows that $G$ must be either a cyclically pinched one relator group or a conjugacy pinched one relator group.

To apply Wilton's Theorem we show  that it must be one-ended. Let $e(H)$ denote the number of ends of a group $H$. From a theorem of Stallings [St]
$e(H) = 0$ if and only if $H$ is finite and 
$e(H) > 1$ if and only if $H$ has a non-trivial decomposition as a free product with 
amalgamation
with finite amalgamated subgroup or a non-trivial decomposition as a 
HNN-group with
finite associated subgroup.

Let $G$ be the group as in the statement of the theorem. Certainly $G$ is not finite so $e(G) > 0$. Since $G$ satisfies Property IF it follows that if $e(G) > 1$ then the amalgamated subgroup (or the associated subgroup) in $G$ 
has to be trivial. That implies that if our $G$ is not one-ended then there is a free 
infinite cyclic factor and $G$ is free or has a non-free subgroup of infinite index. Since $G$ is freely indecomposable it follows that it must be one-ended and Wilton's result applies.

Therefore $G$ is either a surface group or has a nonfree subgroup of infinite index. Again from property IF $G$ must be a surface group settling Surface Group Conjecture C.

\end{proof}  

We now give the proof for cyclically pinched and conjugacy pinched one-relator groups.

\begin{proof} (Theorem 3.1) (1)  Let $G$ be a cyclically pinched one-relator group amalgamated via $U = V$ and suppose that $G$ is nonfree. 
Suppose that not both $U$ and $V$ are proper powers. Then by results of Juhasz and Rosenberger [JR],  Bestvina and Feighn [BeF 1] and
Kharlampovich and Myasnikov [KhM], $G$ must be hyperbolic. Since $G$ satisfies Property IF, as in the proof of Theorem 3.2, $G$ must be one-ended and hence Wilton's theorem applies to give that $G$ must be a surface group. 

Suppose now that both $U$ and $V$ are proper powers. Let $U = g^n,n > 1$ and $V = h^m,m >1$.
 If $G = \langle g,h : g^2 = h^2 \rangle$ then $G$ is a nonhyperbolic, nonorientable surface group of genus 2. 

Now assume that $G$ is not isomorphic to a group $\langle a,b; a^2 = b^2 \rangle$.  Then consider the subgroup $H = \langle g^n,gh \rangle$.  $H$ is free abelian of rank 2 and further $H$ has infinite index in $G$. To see this introduce the relations $g^n = h^m = 1$.  Then the image of $G$ is a nontrivial free product, not isomorphic to the infinite dihedral group, and the image of $H$ is infinite cyclic. However $G$ is assumed to have Property IF and hence this case is impossible.

Thus all cyclically pinched one-relator groups with Property IF must be either free or a surface groups.

(2) Now let $G$ be a conjugacy pinched one-relator group satisfying Property IF and assume $G$ is nonfree. Suppose first that $U$ and $V$ are not both proper powers. Assume that $U$ and $V$ are conjugately separated in $F$, where $F$ is the group generated by $a_1,...,a_n$. This means that $\langle U \rangle \cap x\langle V \rangle x^{-1}$ is finite for all $x \in F$.  Since $G$ is torsion-free this intersection must be trivial.  By a result of Kharlampovich and Myasnikov [KhM4], $G$ is then hyperbolic. Since $G$ satisfies Property IF then, as in the proof of Theorem 3.2,  $G$ is one-ended and hence Wilton's theorem applies to give that $G$ is a surface group.

Thus  $\langle U \rangle \cap x \langle V \rangle x^{-1}$ is infinite, in fact infinite cyclic, for some $x \in F$. After a suitable conjugation and a possible interchange of $U$ and $V$ we may assume that $U$ is not a proper power in $F$ and $V = U^k$ for some $k \ne 0$.  Let $K$ be the subgroup generated by $U$ and $t$. By normal form arguments $K$ has a presentation $K = \langle U,t : tUt^{-1} = U^k \rangle$ (see [FRS]). If $F$ is free of rank $> 1$, that is if $n > 1$, then $K$ is a nonfree subgroup of infinite index in $G$.  This can be seen as follows. In $G$ introduce the relations $t = U = 1$. Then the image of $G$ is a one-relator group $\langle a_1,...,a_n ; U =1 \rangle$ which is infinite by the Freiheitssatz. Therefore this case cannot occur since $G$ satisfies Property IF.

Now let $F$ be cyclic and let $a_1 =  a$, so $G = \langle a,t; tat^{-1} =  a^p\rangle$ with $p = \pm k$.  Then $G$ is a solvable Baumslag-Solitar group. 

Finally let both $U$ and $V$ be proper powers so suppose that $U = g^n, n > 1$ and $V = h^m,m>1$. By normal form arguments the subgroup $N$ generated by $w = tgt^{-1}$ and $h$ has a presentation $N = \langle w,h; w^n = h^m \rangle$  ( see [FRS]). Note that the subgroup $H$ generated by $w^n$ and $wh$ is free abelian of rank 2 and has infinite index in $G$.  Since $G$ is assumed to have Property IF this does not occur. 

Altogether,  if $G$ is a conjugacy pinched one-relator group with Property IF then $G$  must be either free, a surface group or a solvable Baumslag-Solitar group.  
\end{proof}

\section{Some Observations on the General Conjecture}

Here we make some straightforward observations based on the proofs of Theorems 3.1 and 3.2 that might have a bearing on the general Surface Group Conjecture.  From the proofs in [FKMRR] and the proofs of Theorems 3.1 and 3.2 we have the following.

\begin{lemma} Let $G$ be nonfree and  have a graph of groups decomposition with Property IF. Then each factor must be free and $G$ is one-ended.
\end{lemma}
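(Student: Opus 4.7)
The plan is to prove both conclusions using Property IF together with Bass--Serre theory and Stallings' theorem on ends, interpreting the hypothesis as providing $G$ with a nontrivial graph-of-groups decomposition.

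First I would show each vertex group (``factor'') is free. The key intermediate step is that every vertex group has infinite index in $G$. Indeed, in the Bass--Serre tree $T$ associated with the splitting, the vertex stabilisers are conjugates of the vertex groups; if a vertex group $V$ stabilising $v\in T$ had finite index, then the orbit $G\cdot v$ would be finite, and the convex hull of a finite $G$-invariant set in a tree has a $G$-fixed point (possibly after subdividing any inverted edges). That would contradict nontriviality of the splitting. So every vertex group has infinite index, and Property IF then forces each to be free. As a side product $G$ is torsion-free, since every torsion element of a graph of groups is conjugate into a vertex group.

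Next I would deduce one-endedness via Stallings' theorem, mirroring the ends argument already used in the proof of Theorem \ref{mthm2}. Since $G$ is nonfree it is infinite, so $e(G)\in\{1,2,\infty\}$. If $e(G)=2$, then $G$ is virtually $\mathbb Z$; but the only torsion-free virtually cyclic group is $\mathbb Z$ itself, contradicting nonfreeness. If $e(G)=\infty$, Stallings provides a nontrivial splitting of $G$ as an amalgamated product or HNN extension over a finite subgroup, which must be trivial by torsion-freeness, giving $G\cong A*B$ or $G\cong A*\mathbb Z$. Applying the first step to this \emph{new} splitting, the factors $A$ and $B$ are free by Property IF, and a free product of free groups is free, again contradicting nonfreeness. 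Hence $e(G)=1$.

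The main delicate point is the first step: one needs the Bass--Serre argument to rule out finite-index vertex groups, together with the standard fact that torsion elements in a graph of groups are elliptic. Everything else then flows routinely from combining Property IF with Stallings' structure theorem, exactly in the spirit of the proof of Theorem \ref{mthm2}.
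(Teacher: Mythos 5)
Your proof is correct and follows essentially the route the paper intends: the paper gives no explicit argument for this lemma, merely asserting it follows from the proofs of Theorems 3.1 and 3.2, which use exactly your two ingredients --- vertex groups of a nontrivial splitting have infinite index and are therefore free by Property IF, and Stallings' ends theorem together with the resulting torsion-freeness (or IF applied to a finite edge group) rules out more than one end, since a splitting over a trivial subgroup would exhibit $G$ as a free product of free groups, contradicting nonfreeness. Your write-up in fact supplies details (the Bass--Serre fixed-point argument for infinite index, the separate treatment of the two-ended case) that the paper leaves implicit.
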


\begin{lemma} Let $G$ be nonfree and have a graph of groups decomposition with cyclic edge groups and with Property IF. Then $G$ is a cyclically pinched or conjugacy pinched one-relator group, $G$ is one-ended and hence either a surface group or a solvable Baumslag-Solitar group.

\end{lemma}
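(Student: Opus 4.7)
The strategy is to reduce the statement to Theorem \ref{mthm}, which already handles cyclically pinched and conjugacy pinched one-relator groups with Property IF. First I would invoke Lemma 4.1 applied to the given decomposition to conclude that every vertex group is free and that $G$ is one-ended; in particular the cyclic edge groups cannot be trivial, since otherwise $G$ would admit a nontrivial free product splitting, and the infinite-index factors produced by such a splitting would, by Property IF and nonfreeness, contradict one-endedness. Thus $G$ is the fundamental group of a finite graph of groups with free vertex groups and infinite cyclic edge groups, with exactly one end.

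The main task is then to show that the underlying graph must be minimal: either a single edge between two distinct vertices (which makes $G$ a cyclically pinched one-relator group) or a single loop at one vertex (which makes $G$ a conjugacy pinched one-relator group). I would rule out more complicated graphs by a Bass-Serre argument: any edge beyond a spanning tree edge (or beyond a single loop) contributes a stable letter together with a cyclic identification, and by combining stable letters with carefully chosen vertex-group elements one can exhibit either a free abelian subgroup of rank $2$ or another finitely generated non-free subgroup, in each case of infinite index in $G$. These constructions mimic the ones in the proof of Theorem \ref{mthm}(2), where subgroups like $\langle U,t\rangle$ (in the $V=U^k$ subcase) or $\langle w^n, wh\rangle$ (in the "both proper powers" subcase) already sufficed to force a contradiction with Property IF.

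Once the underlying graph is forced into one of the two minimal shapes, $G$ is by construction cyclically pinched or conjugacy pinched. Combined with nonfreeness and Property IF, Theorem \ref{mthm} then delivers the final conclusion: $G$ is a surface group in the cyclically pinched case, and a surface group or a solvable Baumslag-Solitar group in the conjugacy pinched case. One-endedness has already been secured in the first step.

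The principal obstacle is the reduction of the graph to a segment or a single loop. The technical heart is a clean, case-by-case Bass-Serre argument showing that every extra edge or vertex manufactures a finitely generated, non-free, infinite-index subgroup of $G$, contradicting Property IF. Once this reduction is in place the remainder of the proof is essentially bookkeeping, pulling together Lemma 4.1 and Theorem \ref{mthm}.
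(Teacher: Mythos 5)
Your overall skeleton (free vertex groups and one-endedness via Lemma 4.1, then Theorem 3.1 to finish) is consistent with what the paper intends --- the paper writes no proof for this lemma, deferring to the proofs of Theorems 3.1 and 3.2 and to [FKMRR] --- but the step you yourself call the technical heart is not merely unproved, it is false as stated. You propose to show that the \emph{given} underlying graph must be a single segment or a single loop, by manufacturing from every extra edge a finitely generated non-free subgroup of infinite index and contradicting Property IF. Surface groups refute this: a closed orientable surface group of genus $g \ge 2$ satisfies Property IF, yet a pants decomposition exhibits it as the fundamental group of a graph of groups with several vertices and edges, free vertex groups of rank two and infinite cyclic edge groups. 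In such a group every finitely generated subgroup of infinite index \emph{is} free, so no choice of stable letters and vertex elements can yield the contradiction you need; the case-by-case Bass--Serre construction you defer to cannot exist. Note that the lemma does not assert that the given decomposition is minimal, only that $G$ \emph{admits} a cyclically pinched or conjugacy pinched presentation.

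The correct reduction is to collapse rather than to constrain. Collapse all but one edge of the given (nontrivial, reduced) graph of groups: this exhibits $G$ as a one-edge splitting $G = A \ast_C B$ or as an HNN extension of $A$ over $C$, where $C$ is an edge group and $A$, $B$ are the fundamental groups of the complementary subgraphs of groups. In any nontrivial one-edge splitting the vertex group(s) have infinite index in $G$, so Property IF makes them free (and finitely generated, since $G$ is). If $C$ were trivial, $G$ would be a free product of free groups, or a free product of a free group with an infinite cyclic factor in the HNN case, hence free, contradicting nonfreeness; so $C$ is infinite cyclic and $G$ is cyclically pinched or conjugacy pinched. One-endedness then follows by the Stallings argument used in the proof of Theorem 3.2, and Theorem 3.1 gives that $G$ is a surface group or a solvable Baumslag--Solitar group. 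Your appeals to Lemma 4.1 and Theorem 3.1 are fine; it is only the ``graph must be minimal'' step that must be replaced by this collapsing argument, which is exactly the mechanism behind Theorem 2.1 of [FKMRR] that the paper is implicitly reusing.
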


We would like to note how strong Property IF is.  A rewording of Lemma 4.2 implies that if $G$ is not a free, not a surface, and not a Baumslag-Solitar group, then it cannot have a graph of groups decomposition with cyclic edge groups.

Now we consider noncyclic one-relator groups. Certainly if $G$ has property IF it must be torsion-free. Let $G$ be a torsion-free one-relator group with property IF. Using the standard Magnus breakdown $G$ can be taken to be an HNN group. Hence by Property IF the base $F$ is a free group and as in the proofs in section 3 it must be one-ended.  Altogether then if $G$ is a torsion-free finitely generated one-relator group with property IF then $G$ has a presentation as an HNN extension with base group $F$, a free group.  Then by rewriting we can get a presentation of the form
$$ G = <F,t ; tU_1t^{-1} = U_2,tU_2t^{-1} = U_3,...,tU_{k-1}t^{-1} = U_k>$$
for some free group words $U_1,U_2,...,U_k$.  If $k =2 $ then it is a conjugacy pinched one-relator group and hence $G$ is either free, a surface group or a solvable Baumslag-Solitar group. The general surface group conjecture would then be true if the following one is true.

{\bf Conjecture}. \textit{Suppose that
$$ G = <F,t ; tU_1t^{-1} = U_2,tU_2t^{-1} = U_3,...,tU_{k-1}t^{-1} = U_k>$$
with $F$ a finitely generated nonabelian free group  and $U_1,U_2,...,U_k$  some nontrivial free group words. If $k > 2$ there must be a finite index subgroup that is not a one-relator group. }
 
\section*{Acknowledgments}
%%%%%%%%%%%%%%%%%%%%%%%%%%%%%%%%%%%%%%%%%%%%%%%%%%%%%%%%%%%%%%%%%%%%%%%%%%%%
%%%%%%%%%%%%%%%%%%%%%%%%%%%%%%%%%%%%%%%%%%%%%%%%%%%%%%%%%%%%%%%%%%%%%%%%%%%%

The authors were partially supported by the Marie Curie Reintegration Grant 230889.

 \section*{References}

\noindent \lbrack  AFR] P.Ackermann, B.Fine and G.Rosenberger,  On Surface Groups: Motivating
Examples in Combinatorial Group Theory,\textbf{ Groups St. Andrews 2005}. Cambridge University Press, 2007, 96--129

\vspace{.3cm}

\noindent \lbrack BeF 1] M. Bestvina and M. Feighn,  A combination theorem for negatively curved
groups, \textbf{ J. Diff. Geom.}, 35,  1992,  85-101

\vspace{.3cm}

\noindent \lbrack BeF 2] M. Bestvina and M. Feighn,  Notes on Sela's Work: Limit
Groups and Makanin-Razborov Diagrams, preprint

\vspace{.3cm}

\noindent \lbrack  CoLe] D.Collins and F.Levin, Automorphisms and Hopficity of Certain Baumslag-Solitar Groups,  \textbf{Archiv. Math.}, 40, 1983, 385--400

\vspace{.3cm}

\noindent \lbrack FKMRR] B. Fine, O.Kharlampovich, A.Myasnikov,V.Remeslennikov and G. Rosenberger, On the Surface Group Conjecture, \textbf{Scienta: Math Series A }, 1, 2008, 1-15

\vspace{.3cm}

\noindent \lbrack FR] B. Fine and G. Rosenberger, \textbf{ Algebraic Generalizations of Discrete Groups},
 Marcel-Dekker,  1999 
 
\vspace{.3cm}

\noindent \lbrack FRS] B.Fine, G.Rosenberger and M. Stille, Conjugacy Pinched and
Cyclically Pinched One-Relator Groups, \textbf{ Revista Math. Madrid }, 10,  1997,
 207--227
 
\vspace{.3cm}

\noindent \lbrack GS] A. Gaglione and D. Spellman,  Some Model Theory of Free Groups and Free
Algebras, \textbf{ Houston J. Math }, 19,  1993,  327-356

\vspace{.3cm}

\noindent \lbrack GKM] D.Gildenhuys, O. Kharlampovich and A. Myasnikov,  CSA Groups and Separated
Free Constructions, \textbf{Trans. Amer. Math. Soc.}, 350,  1998,  571-613

\vspace{.3cm}

\noindent \lbrack HKS 1] A. Hoare, A. Karrass and D. Solitar,  Subgroups of finite
index of Fuchsian groups, \textbf{ Math. Z. }, 120,  1971,  289--298

\vspace{.3cm}
 
\noindent \lbrack HKS 2] A. Hoare, A. Karrass and D. Solitar, Subgroups of
infinite index in Fuchsian groups, \textbf{ Math. Z. }, 125,  1972, 
59--69 

\vspace{.3cm}

\noindent \lbrack JR] A. Juhasz and G. Rosenberger,  On the Combinatorial Curvature of Groups of 
F-type and Other One-Relator Products of Cyclics, \textbf{Cont. Math.}, 169,  1994,  373-384

\vspace{.3cm}

\noindent \lbrack KS1] A. Karrass and D.Solitar,  The Subgroups of a Free Product of Two Groups with an Amalgamated Subgroup, \textbf{Trans. Amer. Math. Soc.}, {\bf Vol 150}, (1970), 227-255

\vspace{.3cm}

\noindent \lbrack KS2] A. Karrass and D.Solitar,   Subgroups of HNN Groups and Groups with One Defining Relation, \textbf{Can. J. Math. Soc.}, {\bf Vol 23}, (1971), 627-643

\vspace{.3cm}

\noindent \lbrack KhM] O. Kharlampovich and A. Myasnikov,  Hyperbolic groups and free constructions;
\textbf{Trans. Amer. Math. Soc.} 350 (1998), 571-613.

\vspace{.3cm}

\noindent \lbrack  KhM 1] O. Kharlamapovich and A.Myasnikov, 
Irreducible affine varieties over a free group: I. Irreducibility
of quadratic equations and Nullstellensatz, \textbf{  J. of Algebra}, 200, 1998, 472-516

\vspace{.3cm}

\noindent \lbrack KhM 2] O. Kharlamapovich and A.Myasnikov, 
Irreducible affine varieties over a free group: II. Systems in
triangular quasi-quadratic form and a description of residually
free groups, \textbf{ J. of Algebra}, 200, 1998, 517-569

\vspace{.3cm}

\noindent \lbrack KhM 3] O. Kharlamapovich and A.Myasnikov, 
Description of fully residually free groups and Irreducible affine
varieties over free groups, \textbf{ Summer school in Group Theory in
Banff, 1996, CRM Proceedings and Lecture notes }, 17,  1999,
 71-81
 
\vspace{.3cm}

\noindent \lbrack KhM 4] O.Kharlamapovich and A.Myasnikov, 
Hyperbolic Groups and Free Constructions, \textbf{ Trans. Amer. Math.
Soc. } 350, 2,  1998,  571-613

\vspace{.3cm}

%\noindent \lbrack KhM 5] O.Kharlamapovich and A.Myasnikov,  Solution of the Tarski
%Problem, to appear
%
%\vspace{.3cm}

\noindent \lbrack KhM 5] O. Kharlamapovich and A.Myasnikov, 
Description of fully residually free groups and irreducible affine
varieties over a free group,  \textbf{  CRM Proceeding and Lecture
Notes: Summer School in Group Theory in Banff 1996}, 17,
 1998,  71-80 
  
\vspace{.3cm}

\noindent \lbrack Ko] Y.I. Merzlyakov, \textbf{Kourovka Notebook - Unsolved Problems in Group Theory}

\vspace{.3cm}

\noindent \lbrack Re ] V.N. Remeslennikov,   $\exists$-free groups, \textbf{ Siberian Mat. J.}, 30,
 1989,  998--1001
 
\vspace{.3cm}

\noindent \lbrack St ] J. Stallings, On torsion-free groups with infinitely many ends, \textbf{ Annals 
of Math.} (2) 88 (1968), 312-334  

 \vspace{.3cm} 

\noindent \lbrack Se 1] Z. Sela,  Diophantine Geometry over Groups I: Makanin-Razborov Diagrams, 
\textbf{ Publ. Math. de IHES}, 93,  2001,  31-105 

\vspace{.3cm}

%\noindent \lbrack Se 2] Z. Sela,  Diophantine Geometry over Groups II: Completions, Closures and
%Formal Solutions,  \textbf{ Israel Jour. of Math. }, 104,  2003,  173-254 
%
%\vspace{.3cm}

%\noindent \lbrack Se 3] Z. Sela,  Diophantine Geometry over Groups III: Rigid and Solid
%Solutions,  \textbf{Israel Jour. of Math.}, 147,  2005,  1-73 
%
%\vspace{.3cm}
%
%\noindent \lbrack Se 4] Z. Sela,  Diophantine Geometry over Groups IV: An Iterative Procedure for
%Validation of a Sentence, \textbf{ Israel Jour. of Math.}, 143,  2004,  17-130 
%
%\vspace{.3cm}
%
\noindent \lbrack Se2] Z. Sela,  Diophantine Geometry over Groups V: Quantifier Elimination,
\textbf{Israel Jour. of Math.}, 150,  2005,  1-97 

\vspace{.3cm}
%
%\noindent \lbrack Se 6] Z. Sela,  Diophantine Geometry over Groups VI: The Elementary Theory of
%a Free Group, to appear
%
%\vspace{.3cm}

\noindent \lbrack W] H. Wilton, One ended subgroups of graphs of free groups with cyclic 
edge groups \textbf{Geom. Topol. } (to appear) 

\bigskip
\noindent Laura Ciobanu, University of Fribourg, Department of Mathematics, Chemin du Mus\'ee 23, 1700 Fribourg, Switzerland.\\
 \textit{E-mail}: \textrm{laura.ciobanu@unifr.ch}\\
 
 \noindent Benjamin Fine, Fairfield University, Fairfield, CT 06430, USA\\
 \textit{E-mail}: \textrm{fine@fairfield.edu}\\
 
 \noindent Gerhard Rosenberger,Fachebereich Mathematik, University of Hamburg,Bundestrasse 55, 20146 Hamburg, Germany\\
 \textit{E-mail}: \textrm{Gerhard.Rosenberger@math.uni-hamburg.de}

\end{document}